\newtheoremstyle{mystyle}  % ????
{6pt}                   % ????
{6pt}                   % ????
{\normalfont}            % ????
{}                       % ??
{\bfseries}              % ????
{.}                      % ?????
{5pt plus 1pt minus 1pt} % ?????
{}                       % ????
\theoremstyle{mystyle}
\newtheorem{theorem}{Theorem}%  meant for continuous numbers
\newtheorem{lemma}{Lemma}%  meant for continuous numbers
\newtheorem{definition}{Definition}%
\begin{document}

\title[]{Exact distribution of discrete-time D-BMAP/G/$\infty$ queueing model}

%%=============================================================%%
%% GivenName	-> \fnm{Joergen W.}
%% Particle	-> \spfx{van der} -> surname prefix
%% FamilyName	-> \sur{Ploeg}
%% Suffix	-> \sfx{IV}
%% \author*[1,2]{\fnm{Joergen W.} \spfx{van der} \sur{Ploeg} 
%%  \sfx{IV}}\email{iauthor@gmail.com}
%%=============================================================%%

\author[1]{\fnm{Tonglin} \sur{Liao}}%\email{iauthor@gmail.com}

\author*[1]{\fnm{Youming} \sur{Li}}\email{youmingli@uestc.edu.cn}

\affil*[1]{\orgdiv{School of Mathematical Sciences}, \orgname{University of Electronic Science and Technology of China}, \city{Chengdu}, \postcode{611731},  \country{China}}

%%==================================%%
%% Sample for unstructured abstract %%
%%==================================%%

\abstract{
	In this paper, we consider discrete-time D-BMAP/G/$\infty$ queueing model. We construct effective discrete-time Markovian dynamics for this model and utilize it to derive exact time-dependent distribution of customer number and the corresponding moments for the original queueing model. Numerical simulations
are used to verify our results. Using our result, we provide analytical distribution for discrete-time M/M/$\infty$, and then compare it with the distribution of continuous-time M/M/$\infty$.
}

\keywords{Discrete-time queue, D-BMAP/G/$\infty$ queueing model, Effective Markovian dynamics, Probability generating function}

\maketitle

\section{Introduction}\label{sec1}

Since Erlang's pioneering work \cite{erlang1909theory}, there have been considerable research efforts on continuous-time queueing models, which have laid the foundation for the analysis of stochastic service systems. Queueing theory has been extensively applied to diverse domains including information technologies, transportation systems, and modern management systems, where it continues to play a pivotal role in performance optimization and system design \cite{nazarov2014information,kerobyan2018infinite}.  Moreover, queueing theory has also been used in modeling and analyzing biological systems. Recent biological experiments have found non-Markovian phenomena in stochastic gene expression, and due to the great potential of queueing theory in solving non-Markovian models, many researchers studying stochastic gene expression models have begun to pay attention to this classical yet powerful field \cite{shi2023stochastic,shi2024nascent,szavits2024solving}.

To model complex arrival mechanisms in real world, general arrival processes are needed.
The batch Markovian arrival process (BMAP) introduced in \cite{lucantoni1991new} is capable of capturing the batch, correlated, and bursty characteristics of real-world arrival processes \cite{cao2018joint}, and it covers a wide range of well-known arrival processes, including Poisson process (M), Phase-type renewal process (PH), Markov-modulated Poisson process (MMPP), and Markovian arrival process (MAP) \cite{kerobyan2018infinite,schwartz1986telecommunication,neuts1979versatile}. 

While continuous-time queueing models have been extensively studied in the literature, discrete-time queueing systems, whose emergence can be traced back to the year 1958 \cite{meisling1958discrete}, have also attracted increasing attention for their applicability in digital communication and computer systems. In communication network, digital information is often disseminated in fixed-length ``packets'' \cite{harini2023literature}, requiring a fixed-length transmission time known as slots. For example, when studying signal transmission and reception, the transmission time for a packet has been observed to be close to 5.5 ms \cite{wu2016end}, which highlights the discrete nature of such transmission processes. Due to its suitability for modeling computer systems and performance analysis, discrete-time queues have attracted considerable attention from queueing theorists and communication engineers.

Most literature on discrete queueing theory focus on the analysis of single-server systems based on matrix-analytic
methods \cite{ saffer2010unified,hunter2014mathematical,takagi1994analysis,harini2023literature}. For example, there have been recent articles studying the application of MMBP/G/1 in communication networks \cite{wu2016end,wang2011discrete}, where MMBP refers to the Markov-modulated Bernoulli process. On the other hand, multi-server and infinite-server queueing models are equally important \cite{schwartz1986telecommunication} due to their practical applications in various fields. For instance, discrete-time infinite-server queueing models have been used to analyze healthcare demand \cite{worthington2020infinite}.

In this paper, we study the exact solution of the discrete-time D-BMAP/G/$\infty$ queueing model. The main idea is to construct the effective Markovian dynamics of the queueing model, and then solve it by classical methods to obtain the solution for the original queueing model.
%where D refers to discrete, using the effective Markovian dynamics (EMD) method introduced in \cite{li2024effective}. This method   is similar to the method of thinning process used  in \cite{kerobyan2018infinite}, but the EMD method explicitly clarifies the difference between the effective and original processes, and highlights the Markovian nature of the effective process. 
The paper is organized as follows: In Section 2 we introduce the discrete-time queueing model under consideration. In Section 3 we establish the effective Markovian dynamics for the queueing models. In Section 4 we provide the analytical results for D-BMAP/G/$\infty$. In Section 5 we apply our results to some examples.

\section{Discrete-time D-BMAP/G/$\infty$}\label{sec:model}

In this paper we consider discrete-time queueing models with infinite servers. Different from classical continuous-time queueing models, for discrete-time queueing models, time axis is segmented into uniform intervals called time slots. The time axis is denoted by $0,1,..., t, ...$ with $t$ being integer, and the inter-arrival and service times are both non-negative integer-valued random variables. 

We are interested in the distribution of the number of customers present in the system at specific time slots. Since there are infinitely many servers, each arriving customer begins service immediately upon arrival and is assigned an independent service time.

\begin{definition}
Let \( Y \) be the service time of a customer taking values in \{1, 2, \dots\} with probability mass function $
\mathbb{P}\{Y = k\}$. The survival function of the service time distribution is defined by $\Phi(t)=\mathbb{P}(Y>t)$.
\end{definition}

Consider a customer arriving at time slot~\( t \) with service time \( s \). Such a customer is counted at time slots \( t, t+1, \dots, t+s-1 \). but starting from time slot \( t+s \), the customer will no longer be counted, as its service is completed.

The arrival process is governed by a discrete-time batch Markovian arrival process (D-BMAP), a versatile and mathematically tractable model for capturing correlated and bursty arrivals in discrete-time queueing systems \cite{cao2018joint,saffer2010unified}. The D-BMAP framework is built upon a background Markov chain whose state transitions are synchronized with possible batch customer arrivals (i.e., multiple customer arrivals per time slot). Let the state space of the background Markov chain be $\{1,2,\dots\}$, and we let $d_{ij}(l)$ be the probability that there are $l$ customer arrivals along with the transition from $i$ to $j$. Then the synchronized batch customer arrivals can be fully described by a set of arrival probability matrices $\{\mathbf{D}_l\}_{l=0}^{\infty}$, where $\mathbf{D}_0$ is the matrix with elements $d_{ij}(0)$ governs transitions corresponding to no arrivals, and  $\mathbf{D}_l$ with $l\geq 1$ is the matrix with elements $d_{ij}(l)$ that governs transitions corresponding to arrivals of batches of size $l$. Clearly, any D-BMAP is completely determined by the matrix sequence $\{\mathbf{D}_l\}_{l=0}^\infty$. \;\\

\begin{definition}
	The matrix generating function of the $\{\mathbf{D}_l\}_{l=0}^\infty$ is defined by
	\begin{equation}\label{dz}
	D(z)=\sum_{l=0}^\infty \mathbf{D}_lz^l,
	\end{equation}
	with its $(i,j)$-element being $D_{ij}(z)$ defined by
	\begin{equation}\label{dij}
	D_{ij}(z)=\sum_{l=0}^\infty d_{ij}(l)z^l.
	\end{equation}
\end{definition}
Clearly, there exists a bijective correspondence between the matrix sequence $\{\mathbf{D}_l\}_{l=0}^\infty$ and its generating function $D(z)$, and $\mathbf{D}_l$ characterizes the batch size probabilities $d_{ij}(l)$ associated with the transition from $i$ to $j$. Moreover, the matrix $D(1)=\sum_{l=0}^\infty \mathbf{D}_l$ is actually the transition probability matrix for the background Markov chain governing the queueing state, and for this reason we also write the matrix as $P$. 

We stress here that for discrete-time queueing models, when the inter-arrival time or service time distribution is described by ${\rm M}$ using Kendall's notation, then the distribution is actually geometrically distributed since the geometric distribution is the unique memoryless discrete distribution. Specifically, the memoryless service process means that any arrived customer completes its service in each time slot with a fixed probability, regardless of the history, making the total time of leaving follows a geometric distribution.

The D-BMAP actually covers many arrival processes. For example, when a D-BMAP is constrained to allow only $0$ or $1$ arrivals per slot, it reduces to an Markov-Modulated Bernoulli Process (MMBP) with \cite{wu2016end}
\begin{equation*}
D(z)=\mathbf{D}_0+\mathbf{D}_1z.
\end{equation*}

Since in this paper the service times of customers can follow an arbitrary distribution, we assume that there is no customer at the initial time; otherwise, we would need additional information to determine the completion times of these services.

\section{Effective Markovian dynamics for the non-Markovian queueing model}

To describe the state in the queueing model, we need the following definitions.

\begin{definition}
	Let $N(t)$ denote the number of customers in the system at time $t$, and let $I(t)$ be the state of the background Markov chain at time $t$.
\end{definition}

In this paper we are interested in $p_{m}(t)=\mathbb{P}\bigl(N(t) = m\bigr)$, the probability of having $m$ customers at time $t$. Note that $N(t)$ and $I(t)$ are coupled, we actually need to discuss $p_{i,m}(t)=\mathbb{P}\bigl(I(t)=i,N(t) = m\bigr)$. Clearly we have $p_m(t)=\sum_{i=1}^\infty p_{i,m}(t)$.
Since the service times in the queueing models can follow general distribution, the binary process $(I(t),N(t))$ is generally non-Markovian and thus challenging to analyze analytically. We now establish effective Markovian dynamics for the binary process $(I(t),N(t))$ to analytically derive $p_{m}(t)$ \cite{li2024effective}.
\begin{definition}
Let \(N(s;t)\) denote the number of customers present in the system at time \(s\) who will remain in service at time \(t\).
\end{definition}
In the rest of the paper we call \( N(s;t) \) the effective process. The following lemma characterizes the effective process $N(s;t)$.
\begin{lemma}\label{le1}
	The effective process $N(s;t)$ has the following three basic properties:
\begin{enumerate}
		\item  For each fixed time slot $t > 0$, the effective process is defined with $0\leq s\leq t$.
		\item For any \( s \leq t \), we have \( N(s;t) \leq N(s) \). In particular, when $s=t$ we have
		\[
		N(t;t) = N(t).
		\]
	\item  For any \(0 \leq s_1 \leq s_2 \leq t\), we have \(N(s_1;t) \leq N(s_2;t)\).
	%since all customers counted in $N(s_1;t)$ will remain in service at time $t$,  and additional customers arriving during $(s_1,s_2]$ may further increase the count.
\end{enumerate}
\end{lemma}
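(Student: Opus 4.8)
The plan is to prove each of the three properties by directly unpacking the definition of the effective process $N(s;t)$ as the number of customers present at time $s$ who will still be in service at time $t$. All three facts are essentially combinatorial statements about counting subsets of customers, so the main work is bookkeeping with arrival and service times rather than any hard analysis.

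\begin{proof}[Proof sketch]
For property (1), I would argue that $N(s;t)$ is only meaningful for $0\le s\le t$. A customer can be present at time $s$ only if $s\ge 0$ (time starts at slot $0$), and the phrase ``who will remain in service at time $t$'' presupposes that $t$ is a later-or-equal time slot than $s$; if $s>t$ the conditioning is vacuous because remaining in service at an earlier time is not a forward-looking constraint consistent with our definition. Hence the natural and well-defined range is $0\le s\le t$.

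For property (2), the key observation is that $N(s;t)$ counts a \emph{sub-collection} of the customers counted by $N(s)$: by Definition, $N(s)$ is the total number present at time $s$, while $N(s;t)$ restricts attention to those among them whose service is still ongoing at time $t\ge s$. Since every customer counted in $N(s;t)$ is in particular present at time $s$, we immediately get $N(s;t)\le N(s)$. When $s=t$, the restriction ``still in service at time $t$'' is automatically satisfied by every customer present at time $t$ (a customer present at $t$ is by definition in service at $t$), so no customers are removed and $N(t;t)=N(t)$.

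For property (3), fix $0\le s_1\le s_2\le t$ and I would show each customer counted in $N(s_1;t)$ is also counted in $N(s_2;t)$. Recall that a customer arriving at slot $a$ with service time $y$ is present exactly at slots $a,a+1,\dots,a+y-1$, i.e.\ at all slots in the interval $[a,a+y-1]$. A customer contributing to $N(s_1;t)$ satisfies $a\le s_1$ and $a+y-1\ge t$, so its presence interval $[a,a+y-1]$ contains both $s_1$ and $t$; since $s_1\le s_2\le t$, this interval also contains $s_2$, and the conditions $a\le s_2$ and $a+y-1\ge t$ both hold. Therefore the same customer is counted in $N(s_2;t)$, giving $N(s_1;t)\le N(s_2;t)$.

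The only genuinely delicate point is property (3): one must phrase presence in terms of the contiguous interval $[a,a+y-1]$ and use monotonicity of that interval to show the counted set grows as $s$ increases toward $t$. Properties (1) and (2) then follow directly from the definitions once the counting interpretation is made explicit, so I expect no substantial obstacle beyond careful bookkeeping of arrival and service times.
\end{proof}
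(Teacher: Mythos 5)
Your proposal is correct and follows essentially the same route as the paper: the paper formalizes property (2) by writing $N(s;t)$ as $N(s)$ with an extra indicator $\mathbb{I}_{\{Y_i>t-\tau_i\}}$ (which is redundant when $s=t$), and proves property (3) by showing that $\tau_i\le s_1$ together with $\tau_i+Y_i>t$ forces the $s_2$-conditions to hold, which is exactly your interval-containment argument for $[a,a+y-1]\ni s_1,t\Rightarrow s_2$. The only difference is presentational — indicator sums versus set inclusion — so there is nothing substantive to add.
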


\begin{proof}
	The first statement in Lemma \ref{le1} is obviously true. We then prove the second statement. Let $A(s)$ be counting process representing the total number of customer arrivals up to time $s$, and let $\tau_i$ and $Y_i$ be the arrival time and service time for $i$-th customer. Then the original process $N(s)$ can be written as
	\[
	N(s) = \sum_{i=1}^{A(s)} \mathbb{I}_{\{\tau_i \leq s < \tau_i + Y_i\}},
	\]
	where $\mathbb{I}$ is the indicator function. With these notation, the effective process can also be written by
	\[
	N(s;t) = \sum_{i=1}^{A(s)} \mathbb{I}_{\{\tau_i \leq s < \tau_i + Y_i\}} \cdot \mathbb{I}_{\{Y_i > t - \tau_i\}}.
	\]
	Then we have
	\begin{equation*}
	\begin{aligned}
	N(s;t) &= \sum_{i=1}^{A(s)} \mathbb{I}_{\{\tau_i \leq s < \tau_i + Y_i\}} \cdot \mathbb{I}_{\{Y_i > t - \tau_i\}}\leq \sum_{i=1}^{A(s)} \mathbb{I}_{\{\tau_i \leq s < \tau_i + Y_i\}} = N(s).
	\end{aligned}
	\end{equation*}
	Moreover, taking $s=t$ yields
	\begin{equation*}
	N(t;t)=\sum_{i=1}^{A(t)} \mathbb{I}_{\{\tau_i \leq t < \tau_i + Y_i\}} \cdot \mathbb{I}_{\{Y_i > t-\tau_i\}}=\sum_{i=1}^{A(t)} \mathbb{I}_{\{\tau_i \leq t < \tau_i + Y_i\}}=N(t),
	\end{equation*}
since $\tau_i\leq t<\tau_i+Y_i$ always yields $Y_i>t-\tau_i$, and this completes the proof of the second statement. 
	To prove the third statement, we consider \( 0 \leq s_1 \leq s_2 \leq t \). Recall that we can write $N(s_1;t)$ and $N(s_2;t)$ as
	\[
	N(s_1;t) = \sum_{i=1}^{A(s_1)} \mathbb{I}_{\{\tau_i \leq s_1 < \tau_i + Y_i\}} \cdot \mathbb{I}_{\{Y_i > t - \tau_i\}},
	\]
	\[
	N(s_2;t) = \sum_{i=1}^{A(s_2)} \mathbb{I}_{\{\tau_i \leq s_2 < \tau_i + Y_i\}} \cdot \mathbb{I}_{\{Y_i > t - \tau_i\}}.
	\]
Clearly, we have \( A(s_1) \leq A(s_2) \), and this gives
	\begin{equation}\label{eq4}
	N(s_2;t) = \sum_{i=1}^{A(s_1)} \mathbb{I}_{\{\tau_i \leq s_2 < \tau_i + Y_i\}} \cdot \mathbb{I}_{\{Y_i > t - \tau_i\}} + \sum_{i=A(s_1)+1}^{A(s_2)} \mathbb{I}_{\{\tau_i \leq s_2 < \tau_i + Y_i\}} \cdot \mathbb{I}_{\{Y_i > t - \tau_i\}}.
	\end{equation}
	For each \( i \leq A(s_1) \) with $\tau_i \leq s_1 < \tau_i + Y_i$ and $ Y_i > t - \tau_i$, we have $\tau_i\leq s_1\leq s_2$ and $Y_i+\tau_i>t\geq s_2$, then we have that
	\begin{equation}\label{eq5}
	\mathbb{I}_{\{\tau_i \leq s_1 < \tau_i + Y_i\}} \cdot \mathbb{I}_{\{Y_i > t - \tau_i\}}\leq \mathbb{I}_{\{\tau_i \leq s_2 < \tau_i + Y_i\}} \cdot \mathbb{I}_{\{Y_i > t - \tau_i\}}.
\end{equation}
Combining Eqs. \eqref{eq4} and \eqref{eq5} we obtain that	
\begin{equation*}
\begin{aligned}
	\;&N(s_1;t)= \sum_{i=1}^{A(s_1)} \mathbb{I}_{\{\tau_i \leq s_1 < \tau_i + Y_i\}} \cdot \mathbb{I}_{\{Y_i > t - \tau_i\}}\\
	\;& \leq \sum_{i=1}^{A(s_1)} \mathbb{I}_{\{\tau_i \leq s_2 < \tau_i + Y_i\}} \cdot \mathbb{I}_{\{Y_i > t - \tau_i\}} + \sum_{i=A(s_1)+1}^{A(s_2)} \mathbb{I}_{\{\tau_i \leq s_2 < \tau_i + Y_i\}} \cdot \mathbb{I}_{\{Y_i > t - \tau_i\}}=N(s_2;t).
\end{aligned}
\end{equation*}
	This completes the proof.
\end{proof}

We now intuitively explain Lemma \ref{le1}. The second statement $N(s;t)\leq N(s)$ holds since those customers at time $s$ leaving before or at time $t$ will not be counted in $N(s;t)$. In addition,
 $N(s_1;t)\leq N(s_2;t)$ holds since all customers counted in $N(s_1;t)$ will remain in service at time $t$, therefore all of them will also all be counted in $N(s_2;t)$ and additional customers arriving during $(s_1,s_2]$ may further increase the count. In Fig. \ref{fig1} we present the comparison of trajectories of the original and effective processes for illustration.

\begin{figure}[htb!]
	\centering\includegraphics[width = 0.8\textwidth]{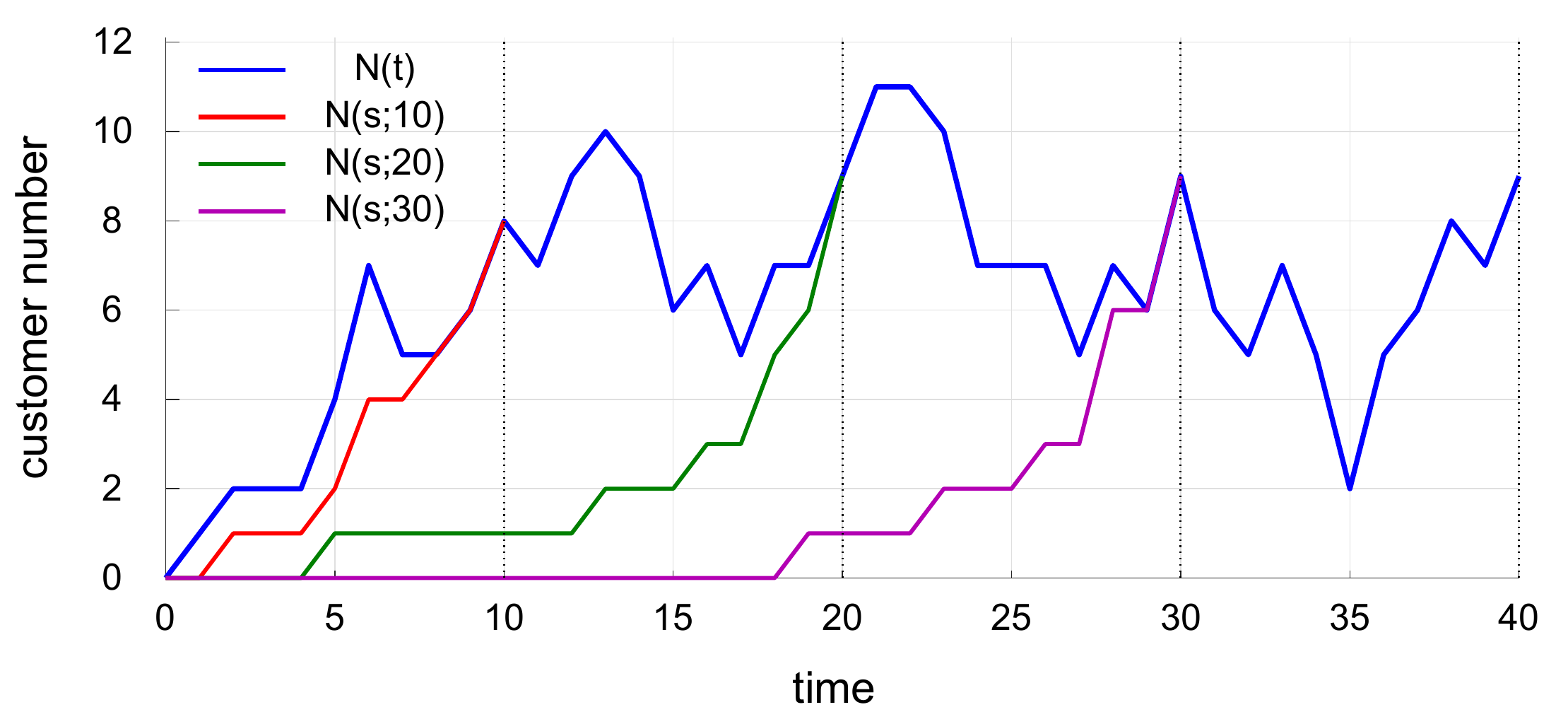}
	\caption{\textbf{Illustration of the effective process $N(s;t)$ for D-BMAP/G/$\infty$.} Here the blue curve is a trajectory of $N(t)$, and the other three curves are the trajectories of $N(s;t)$ with different $t$. Here the D-BMAP is given by $D_0 = [0.1, 0.2; 0.05, 0.2],  		D_1 = [0.1, 0.1; 0.1, 0.1],
		D_2 = [0.1, 0.15; 0.1, 0.1],  
		D_3 = [0.05, 0.1; 0.1, 0.05], 
		D_4 = [0.05, 0.05; 0.1, 0.1]$, and the service times are Poissonian distributed with mean value being $4$.}\label{fig1}
\end{figure}

It thus follows from the property $N(t;t)=N(t)$ that if we can obtain the distribution of $N(s;t)$, then by taking $s=t$ we will obtain the distribution of $N(t)$ for the original queueing model. The main reason of introducing the effective process $N(s;t)$ can be seen from the following lemma.
\begin{lemma}\label{le2}
	The effective process $(I(s),N(s;t))$ is Markovian, namely, for any state $(i,m)$ and $(j,n)$ with $n\geq m$, there exists a probability $p_{(i,m),(j,n)}$ such that 
	\begin{equation}\label{eq1}
	\begin{split}
	\mathbb{P}\Bigl(I(s+1) = j,\, N(s+1;t) = n \mid I(s)=i,\, N(s;t)=m,\, \mathcal{F}_{s}\Bigr)=p_{(i,m),(j,n)},
	\end{split}
	\end{equation}
	where $\mathcal{F}_s = \sigma\{(I(u),N(u;t)):u\leq s\}$ is a $\sigma$-algebra which contains all information of the effective process up to time $s$.
	\end{lemma}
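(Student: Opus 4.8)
The plan is to establish the Markov property of the effective process $(I(s), N(s;t))$ by showing that the transition from time $s$ to $s+1$ depends only on the current state $(i,m)$ and not on the full history $\mathcal{F}_s$. The key observation is that $N(s;t)$ counts precisely those customers present at time $s$ whose service extends past time $t$; the crucial feature is that once a customer is counted in $N(s;t)$, it remains counted in $N(s';t)$ for all $s \leq s' \leq t$ by the monotonicity established in the third statement of Lemma \ref{le1}. In other words, \emph{no customer counted in the effective process ever departs before time $t$}, so the only way the count changes as $s$ increases to $s+1$ is through \emph{new} arrivals in slot $s+1$ that happen to have service times long enough to survive past $t$.

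First I would decompose the increment $N(s+1;t) - N(s;t)$. Using the representation from the proof of Lemma \ref{le1}, the customers counted in $N(s;t)$ are a subset of those counted in $N(s+1;t)$, and the new contributions come exactly from customers arriving at slot $s+1$, i.e. indices $i$ with $A(s)< i \leq A(s+1)$ and $\tau_i = s+1$, each weighted by $\mathbb{I}_{\{Y_i > t - \tau_i\}} = \mathbb{I}_{\{Y_i > t-(s+1)\}}$. Thus $N(s+1;t) = N(s;t) + K$, where $K$ is the number of slot-$(s+1)$ arrivals whose service time exceeds $t-(s+1)$. The next step is to analyze the joint law of the transition $I(s)=i \to I(s+1)=j$ together with $K$. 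The number of arrivals in slot $s+1$ and the accompanying state transition are governed by the D-BMAP matrices $\{\mathbf{D}_l\}$, which by the very definition of the D-BMAP depend only on the current background state $I(s)=i$ and are conditionally independent of the past given $I(s)$. Each such arriving customer is independently assigned a service time $Y$ with $\mathbb{P}(Y > t-(s+1)) = \Phi(t-s-1)$, independently of everything else, so $K$ is a thinning of the batch size determined solely by $i$, $j$, and the survival probability $\Phi(t-s-1)$.

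I would then assemble these pieces: conditioning on a batch of size $l$ arriving with the transition $i \to j$ (probability $d_{ij}(l)$), the number $K$ surviving past $t$ is Binomial$(l, \Phi(t-s-1))$, and summing over $l \geq n-m$ yields an explicit expression for $p_{(i,m),(j,n)}$ that depends only on $(i,m)$, $(j,n)$, and the deterministic quantity $t-s-1$ — crucially \emph{not} on $\mathcal{F}_s$. This independence from the history is exactly the content of Eq.~\eqref{eq1}. The main obstacle to make rigorous is justifying that the service-time thinning is conditionally independent of the history $\mathcal{F}_s$: one must argue that whether a \emph{newly} arrived customer survives past $t$ is determined by a fresh service time $Y_i$ drawn independently of all past arrivals and states, and that no customer \emph{already} counted in $N(s;t)$ can depart in the interval $(s, s+1]$ (again by Lemma \ref{le1}, part 3, since such departures would require crossing $t$, which is impossible for $s+1 \leq t$). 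Once these two facts are in place, the right-hand side of Eq.~\eqref{eq1} is manifestly a function of $(i,m,j,n)$ alone, establishing the Markov property.
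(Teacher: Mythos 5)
Your proof follows essentially the same route as the paper's: decompose the one-step increment of the effective process as a binomial thinning of the batch accompanying the background transition $i\to j$, invoke the mutual independence of the background transition, the batch size, and the i.i.d.\ service times to remove any dependence on $\mathcal{F}_s$, and sum over batch sizes $l\ge n-m$ to obtain $p_{(i,m),(j,n)}=\sum_{l\ge n-m}d_{ij}(l)\binom{l}{n-m}q^{\,n-m}(1-q)^{\,l-(n-m)}$ for a suitable survival probability $q$. The single point of divergence is the value of $q$: you place the new batch at slot $s+1$ and thin with $q=\Phi(t-s-1)$, whereas the paper's proof declares that these customers ``arrive at time $s$'' and uses $q=\Phi(t-s)$, which is the form that propagates into Theorems \ref{thm1} and \ref{thm2}. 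Under the counting convention stated in Section \ref{sec:model} (a customer arriving at slot $u$ with service time $Y$ is counted at slots $u,\dots,u+Y-1$, hence survives to $t$ iff $Y>t-u$), the customers contributing to $N(s+1;t)-N(s;t)$ are precisely those first counted at slot $s+1$, so your $\Phi(t-s-1)$ is the choice consistent with that convention; note also that the paper's own ${\rm M}/{\rm M}/\infty$ cross-check via the recursion $G(z,t)=G(1-\alpha+\alpha z,t-1)(1-p+pz)$ with $G(z,0)=1$ yields $\prod_{i=0}^{t-1}\bigl[1+p\alpha^{i}(z-1)\bigr]$, which matches your indexing rather than the displayed $\prod_{i=1}^{t}$. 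This is therefore an indexing convention you would need to reconcile with the paper's statement of the transition kernel, not a flaw in your reasoning; the rest of your argument --- the exact decomposition $N(s+1;t)=N(s;t)+K$, the observation (via part 3 of Lemma \ref{le1}) that no customer counted in $N(s;t)$ can drop out before $t$, and the conditional-independence justification for the fresh service times --- is correct and mirrors the paper's proof, with your treatment of the independence of the new service times from $\mathcal{F}_s$ being, if anything, more explicit than the paper's one-line appeal to mutual independence.
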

\begin{proof}
	Note that the transition from state $(i, m)$ to state $(j, n)$ with $n\geq m$ occurs if and only if when the system transitions from $i$ and $j$ with exactly $n-m$ customers who remain in service at time $t$ arriving. Clearly, the latter event can be achieved by having $l$ customers arriving with $l\geq n-m$ and meanwhile only $n-m$ customers among them remain in service at time $t$. 	Since the state transition, customer arrival, and the service times are mutually independent, we have that 
	\begin{equation}\label{eq2}
\begin{aligned}
	&\mathbb{P}\Bigl(I(s+1) = j,\, N(s+1;t) = n \mid I(s)=i,\, N(s;t)=m,\, \mathcal{F}_{s}\Bigr) \\
	 =\;& \sum_{l=n-m}^\infty \mathbb{P}\Bigl(I(s+1) = j,\, N(s+1)-N(s) = l \mid I(s)=i\Bigr)\\
	&\times \mathbb{P}\Bigl(N(s+1;t)-N(s;t)=n-m\mid N(s+1)-N(s) = l\Bigr)\\
	=\;&	\sum_{l=n-m}^\infty d_{ij}(l)\binom{l}{n-m}\Phi(t-s)^{\,n-m}\Bigl[1-\Phi(t-s)\Bigr]^{\,l-(n-m)}\triangleq p_{(i,m),(j,n)},
\end{aligned}
	\end{equation}
	where we have considered all cases in which exactly \(n-m\) customers remain in service at time $t$ from a batch of \(l\) arrivals with $l\geq n-m$ and we have used the fact that for a customer arriving at time $s$ to remain in service at time $t$, then its service time much be strictly larger than $t-s$. Combining Eqs. \eqref{eq1} and \eqref{eq2} we complete the proof.
\end{proof}

Lemma \ref{le2} shows that the evolution of the effective binary process $(I(s),N(s;t))$ is fully determined by its present state, therefore it is indeed Markovian. Using Lemma \ref{le2} we can establish the following update equation:
	\begin{equation}
	\begin{aligned}
	&\;p_{j,n}(s+1;t)=\sum_{i=1}^\infty \sum_{m=0}^n p_{i,m}(s;t)p_{(i,m),(j,n)}\\ &\;=\sum_{i=1}^{\infty} \sum_{m=0}^{n} p_{i,m}(s;t) \sum_{l=n-m}^{\infty} d_{ij}(l) \binom{l}{n-m}\Phi(t-s)^{\,n-m}\Bigl[1-\Phi(t-s)\Bigr]^{\,l-(n-m)}.
	\label{eq:markov_update}
	\end{aligned}
	\end{equation}
The above update equation shows that $p_{j,n}(s+1;t)$ can be computed recursively, but the update equation itself seems to be very complicated. We then provide simplified expression of the update equation by using probability generating functions (PGF).

\section{Main Results}\label{sec:pgf}
In this section we show how to compute the exact probability distribution of the original queueing model.
% we iterate the evolution of the joint state \((I(s), N(s;t))\) from the initial time \(s=0\) (with an appropriate initial condition) up to \(s=t\). At this terminal point, the state variable \(N(t;t)\) exactly recovers the system state \(N(t)\), thereby yielding the transient distribution of the original system. This demonstrates that the EMD system provides a complete and exact Markovian representation of the non-Markovian underlying dynamics.
\begin{definition} For each state \(j\) we define the state-dependent generating functions for $N(s;t)$ and $N(t)$ as
	\[
	G_j(z,s;t) = \sum_{n=0}^{\infty} p_{j,n}(s;t) z^n,\;\;\;G_j(z,t) = \sum_{n=0}^{\infty} p_{j,n}(t) z^n,
	\]
	respectively.
\end{definition}
Then the generating functions for $N(s;t)$ and $N(t)$ can be given by
\begin{equation*}
G(z,s;t)=\sum_{j=1}^\infty G_j(z,s;t),\;\;\;G(z,t)=\sum_{j=1}^\infty G_j(z,t).
\end{equation*}
It then follows from Lemma \ref{le1} that $G(z,t)=G(z,t;t)$, hence we only need to obtain $G(z,s;t)$ with $0\leq s\leq t$.

The following theorem provides the update equation with respect to state-dependent generating functions $G_j(z,s;t)$.
\begin{theorem}\label{thm1}
	The state-dependent generating functions \(G_j(z,s;t)\) satisfy the following recursive relation:
	\begin{equation}\label{main}
	G_j(z,s+1;t) = \sum_{i=1}^{\infty} G_i(z,s;t)\, D_{ij}\bigl(\Phi(t-s)\,z + 1 - \Phi(t-s)\bigr),
	\end{equation}
	where $D_{ij}(z)$ is defined in Eq. \eqref{dij}.
\end{theorem}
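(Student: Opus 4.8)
The plan is to start from the definition $G_j(z,s+1;t)=\sum_{n=0}^\infty p_{j,n}(s+1;t)z^n$, substitute the explicit update equation \eqref{eq:markov_update} for $p_{j,n}(s+1;t)$, and then reorganize the resulting nested sums until the binomial theorem collapses the innermost sum into the argument $\Phi(t-s)z+1-\Phi(t-s)$. Abbreviating $\phi=\Phi(t-s)$, this substitution produces a quadruple sum over $n\ge 0$, the background state $i\ge 1$, the survivor count $m$ with $0\le m\le n$, and the batch size $l$ with $l\ge n-m$.

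The heart of the argument is a carefully ordered sequence of index manipulations. First I would move the summation over $i$ to the outermost position and interchange the order of the $n$- and $m$-sums, so that the constraint $0\le m\le n<\infty$ becomes $0\le m<\infty$ together with $n\ge m$. I would then set $k=n-m$, the number of newly arrived customers that survive to time $t$; this splits $z^n=z^m z^k$ and factors the $m$-dependence out as $\sum_{m=0}^\infty p_{i,m}(s;t)z^m=G_i(z,s;t)$. Finally, interchanging the $k$- and $l$-sums so that $0\le k\le l$ exposes the inner sum $\sum_{k=0}^l\binom{l}{k}(\phi z)^k(1-\phi)^{l-k}=(\phi z+1-\phi)^l$; summing this against $d_{ij}(l)$ reproduces $D_{ij}(\phi z+1-\phi)$ by definition \eqref{dij}, and collecting the pieces yields \eqref{main}.

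The only genuinely delicate point is the justification for repeatedly interchanging infinite summations. Since the probabilities $p_{i,m}(s;t)$, the arrival probabilities $d_{ij}(l)$, the binomial coefficients, and the powers of $\phi\in[0,1]$ and of $z$ are all nonnegative for $z\in[0,1]$ --- the range on which every generating function in sight converges absolutely --- Tonelli's theorem guarantees that the order of summation is immaterial, and the identity extends to all admissible $z$ by analytic continuation. I therefore expect the proof to be essentially a bookkeeping exercise in which the substantive choices are the change of variables $k=n-m$ and the order of interchanges that isolate $G_i(z,s;t)$ from the binomial factor; once these are in place, the binomial theorem does the remaining work.
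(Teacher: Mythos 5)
Your proposal is correct and follows essentially the same route as the paper's own proof: substitute the update equation into the definition of $G_j(z,s+1;t)$, reindex with $k=n-m$ to factor out $G_i(z,s;t)$, and collapse the inner sum via the binomial theorem into $D_{ij}(\Phi(t-s)z+1-\Phi(t-s))$. Your explicit appeal to Tonelli's theorem to justify the interchanges of infinite sums is a small addition the paper leaves implicit, but it does not change the argument.
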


\begin{proof}
	Multiplying $z^n$ on both sides of Eq. \eqref{eq:markov_update} and then summing over $n$ gives
	\begin{equation*}
	\begin{aligned}
	&\;	G_j(z, s+1; t) \\
	&\;= \sum_{n=0}^{\infty} z^n \sum_{i=1}^{\infty} \sum_{m=0}^n p_{i,m}(s;t)
		\sum_{l=n-m}^{\infty} d_{ij}(l) \binom{l}{n-m} \Phi(t-s)^{n-m} [1 - \Phi(t-s)]^{l-n+m}.\\
	\end{aligned}
	\end{equation*}
	By interchanging the summation order and applying the substitution $k = n - m$, we can rewrite the above equation as
	\[
	G_j(z,s+1;t)=\sum_{i=1}^{\infty} \sum_{m=0}^{\infty}  p_{i,m}(s;t)z^m
	\sum_{k=0}^{\infty} z^k \sum_{l=k}^{\infty} d_{ij}(l) \binom{l}{k} \Phi(t-s)^k\left[1-\Phi(t-s)\right]^{l-k},
	\]
	where we have divided $z^n$ as $z^m\times z^k$. Rearranging the sums in the above equations and applying the binomial theorem, we obtain that
	\begin{equation*}
	\begin{aligned}
	G_j(z,s+1;t)&\;=\sum_{i=1}^{\infty} \sum_{m=0}^{\infty}  p_{i,m}(s;t)z^m
	\sum_{l=0}^{\infty} d_{ij}(l) \sum_{k=0}^{l}  \binom{l}{k} \Phi(t-s)^k\left[1-\Phi(t-s)\right]^{l-k}z^k\\
	&\;=\sum_{i=1}^{\infty} G_i(z,s;t)
	\sum_{l=0}^{\infty} d_{ij}(l) \Bigl[\Phi(t-s)z+1-\Phi(t-s)\Bigr]^l.\\
\end{aligned}
\end{equation*}
The proof is then completed by noting that the inner sum is actually the generating function \(D_{ij}(z)\) evalutated at \(\Phi(t-s)z+1-\Phi(t-s)\).
\end{proof}

We then use Theorem \ref{thm1} to analytically derive $G(z,t)$ for the original non-Markovian queueing models. 
\begin{definition} 
	Let
\begin{equation*}
\begin{aligned}
\mathbf{g}(z,s;t)&\;=[G_1(z,s;t),G_2(z,s;t),\dots,G_j(z,s;t),\dots],\\
\mathbf{g}(z,t)&\;=[G_1(z,t),G_2(z,t),\dots,G_j(z,t),\dots],\\
\end{aligned}
\end{equation*}
be the vectors of state-dependent generating functions for the effective and original processes, respectively.
\end{definition}
According to the definitions above, the generating function $G(z,t)$ is given by $G(z,t)=\mathbf{g}(z,t)\bm{1}^T$, where $\bm{1}$ is the row vector with all its elements being $1$. Recall that we have assumed that there is no customer at time $0$, therefore $\mathbf{g}(z,0;t)=\mathbf{g}(z,0)$ and $\mathbf{g}(z,0)=(p_1(0),p_2(0),\dots)$ is simply a constant vector with its $j$-th element $p_j(0)$ being the initial probability for the queueing model to stay at the state $j$. For this reason we write $\mathbf{g}(z,0)=\mathbf{p}_0$.

It then follows from Eq. \eqref{main} that
\begin{equation}\label{eq3}
\mathbf{g}(z,s+1;t)=\mathbf{g}(z,s;t)\, T(z,s;t),
\end{equation}
where $T(z,s;t)$ is a matrix defined by
\begin{equation}\label{t}
T(z,s;t) = D\Bigl(\Phi(t-s)z+1-\Phi(t-s)\Bigr),
\end{equation}
with $D(z)$ being defined in Eq. \eqref{dz}. By iteratively using Eq. \eqref{eq3} and then applying Lemma \ref{le1} we finally obtain the following main result of this paper.

\begin{theorem}\label{thm2}
	The vector of state-dependent generating functions $\mathbf{g}(z,t)$ for the original discrete-time queueing model can be explicitly written as
	\begin{equation}\label{main2}
\mathbf{g}(z,t)= \mathbf{p}_0\prod_{k=0}^{t-1}T(z,k;t).
	\end{equation}
\end{theorem}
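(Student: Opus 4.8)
The plan is to read Eq.~\eqref{eq3} as a one-step linear recursion in the time variable $s$, iterate it from the known empty-system initial datum at $s=0$, and then specialize to $s=t$ so that Lemma~\ref{le1} converts the effective process back into the original one. Throughout I would fix $z$ and the target slot $t$, and treat $s$ as the running index. The object to establish by induction on $s$, for $0\leq s\leq t$, is the intermediate identity
\[
\mathbf{g}(z,s;t)=\mathbf{p}_0\prod_{k=0}^{s-1}T(z,k;t),
\]
where the matrix product is taken in order of increasing $k$ from left to right, namely $T(z,0;t)\,T(z,1;t)\cdots T(z,s-1;t)$, and the empty product (the case $s=0$) is interpreted as the identity matrix.

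First I would check the base case $s=0$. Since the paper assumes the system is empty at time $0$, we have $\mathbf{g}(z,0;t)=\mathbf{g}(z,0)=\mathbf{p}_0$, which is exactly the right-hand side with an empty product. For the inductive step I would assume the identity at index $s$ and apply Eq.~\eqref{eq3}, which appends the factor $T(z,s;t)$ on the right:
\[
\mathbf{g}(z,s+1;t)=\mathbf{g}(z,s;t)\,T(z,s;t)=\mathbf{p}_0\left(\prod_{k=0}^{s-1}T(z,k;t)\right)T(z,s;t)=\mathbf{p}_0\prod_{k=0}^{s}T(z,k;t),
\]
which is the identity at index $s+1$. The single point demanding care here is the ordering of the noncommuting matrices: because the recursion multiplies the vector on the right by $T(z,s;t)$ at each step, the successive time slices must be concatenated on the right, matching the order $k=0,1,\dots,t-1$ written in Eq.~\eqref{main2}.

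Finally I would set $s=t$ in the intermediate identity to obtain $\mathbf{g}(z,t;t)=\mathbf{p}_0\prod_{k=0}^{t-1}T(z,k;t)$, and then invoke the second statement of Lemma~\ref{le1}, namely $N(t;t)=N(t)$. This gives $p_{j,n}(t;t)=p_{j,n}(t)$ for every state $j$ and count $n$, hence $G_j(z,t)=G_j(z,t;t)$ and therefore $\mathbf{g}(z,t)=\mathbf{g}(z,t;t)$. Substituting this into the specialized identity yields Eq.~\eqref{main2}.

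I do not expect a deep obstacle, since the argument is merely a disciplined iteration of the already-established linear recursion of Theorem~\ref{thm1}; the real work is front-loaded into that theorem (which collapses the cumbersome scalar update into the clean matrix action of $T$) and into Lemma~\ref{le1} (which equates the effective and original processes at $s=t$). The only genuine subtlety I would flag is that $\mathbf{g}$ and $T$ are formally infinite, since the background state space is $\{1,2,\dots\}$, so each vector-matrix multiplication hides an infinite sum; to be fully rigorous I would note that every entry is a probability generating function, bounded for $|z|\leq 1$, and that $D(1)=P$ is stochastic, which together guarantee that the relevant series converge absolutely and the reordering in the induction is legitimate.
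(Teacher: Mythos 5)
Your proposal is correct and follows exactly the route the paper takes: the paper obtains Theorem~\ref{thm2} by iterating Eq.~\eqref{eq3} from the initial datum $\mathbf{g}(z,0;t)=\mathbf{p}_0$ and then setting $s=t$ via the identity $N(t;t)=N(t)$ of Lemma~\ref{le1}, which is precisely your induction. Your added remarks on the left-to-right ordering of the noncommuting factors and on convergence for the infinite background state space are sound refinements of the same argument, not a different approach.
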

The time-dependent distribution $p_m(t)$ can then be recovered from the generating function $G(z,t)$ by taking derivatives as follows:
\begin{equation}\label{dist}
p_m(t)=\frac{1}{m!} G^{(m)}(0,t).
\end{equation}
Note that $t$ is a discrete variable, hence the derivative can only be taken with respect to $z$. Taking $t\rightarrow \infty$ in $p_m(t)$, we obtain the stationary distribution of customer number. 

We then discuss the moments of $p_m(t)$. To proceed, we let
$\mu_k(t)$ be the $k$-th factorial moment of the customer number distribution defined by
\begin{equation*}
\mu_k(t)=\sum_{m=0}^\infty m(m-1)\dots (m-k+1)p_m(t).
\end{equation*}
The reason why we use factorial moments here is that they can be directly recovered from the generating function as follows:
\begin{equation}\label{moment}
\mu_k(t)=G^{(k)}(1,t).
\end{equation}
To give the exact expressions of the distribution and the moments, we need to discuss the high-order derivatives of the generating function. Recall that $\mathbf{g}(z,0)$ is a constant vector $\mathbf{p}_0$, then by applying generalized Leibniz's rule to Eq. \eqref{main2} we obtain
\begin{equation*}
 \mathbf{g}^{(m)}(z,t)= 
\mathbf{p}_0
\sum_{\substack{l_0 + l_1 + \cdots + l_{t-1} = m \\ l_k \geq 0}} 
\frac{m!}{l_0! l_1! \cdots l_{t-1}!} 
\prod_{i=0}^{t-1}  T^{(l_i)}(z,i;t).
\end{equation*}
We stress here that the product order must be strictly maintained (multiplied from $0$ to $t-1$ in sequence), since the matrix multiplication is non-commutative. Specifically, for a given sequence $l_0,l_1,\dots,l_{t-1}$, the product is given by
\begin{equation*}
\prod_{i=0}^{t-1}  T^{(l_i)}(z,i;t)=T^{(l_0)}(z,0;t)T^{(l_1)}(z,1;t)\dots T^{(l_{t-1})}(z,t-1;t).
\end{equation*}
We obtain from Eq. \eqref{t} that
\begin{equation*}
T^{(l_i)}(z,i;t)=\Phi(t-i)^{l_i} D^{(l_i)}\left(\Phi(t-i)z+1-\Phi(t-i)\right).
\end{equation*}
Combining the above equations we arrive at
\begin{equation}\label{deri}
\begin{aligned}
\mathbf{g}^{(m)}(0,t)&\;= 
\mathbf{p}_0
\sum_{\substack{l_0 + l_1 + \cdots + l_{t-1} = m \\ l_i \geq 0}} 
\frac{m!}{l_0! l_1! \cdots l_{t-1}!} 
\prod_{i=0}^{t-1} \Phi(t-i)^{l_i} D^{(l_i)}\left(1-\Phi(t-i)\right),\\
\mathbf{g}^{(m)}(1,t)&\;= 
\mathbf{p}_0
\sum_{\substack{l_0 + l_1 + \cdots + l_{t-1} = m \\ l_i \geq 0}} 
\frac{m!}{l_0! l_1! \cdots l_{t-1}!} 
\prod_{i=0}^{t-1} \Phi(t-i)^{l_i} D^{(l_i)}(1).
\end{aligned}
\end{equation}
We stress here that $D^{(l_i)}(1)$ are exactly the matrix of factorial moments for state-transition for the background Markov chain. Inserting Eq. \eqref{deri} into Eqs. \eqref{dist} and \eqref{moment} gives the exact distribution and the corresponding moments, respectively. We now provide exact mean customer number and the variance. Taking $m=1,2$ in Eq. \eqref{deri} we obtain
\begin{equation*}
\begin{aligned}
\mathbf{g}^{(1)}(1,t)=&\; 
\mathbf{p}_0
\sum_{i=0}^{t-1}\Phi(t-i)P^iD^{(1)}(1)P^{t-1-i},\\
\mathbf{g}^{(2)}(1,t)=&\; 
\mathbf{p}_0\left[
2\sum_{0\leq i<j\leq t-1}\Phi(t-i)\Phi(t-j)P^{i-1}D^{(1)}(1)P^{j-i-1}D^{(1)}(1)P^{t-1-j}\right.\\
&\;\left.+\sum_{i=0}^{t-1}\Phi(t-i)^2P^iD^{(2)}(1)P^{t-1-i}\right].
\end{aligned}
\end{equation*}
Finally, using the above expressions, the mean customer number $m(t)$ and the variance $\sigma^2(t)$ can be explicitly written as
\begin{equation*}
\begin{aligned}
m(t)&\;=\mathbf{g}^{(1)}(1,t)\bm{1}^T,\\
\sigma^2(t)&\;=\left[\mathbf{g}^{(2)}(1,t)+\mathbf{g}^{(1)}(1,t)-\left(\mathbf{g}^{(1)}(1,t)\right)^2\right]\bm{1}^T.
\end{aligned}
\end{equation*}

\section{Examples}

In this section we apply our results to some examples. First we consider a two-state D-BMAP queueing model with the following generating function:
\begin{equation}\label{example1}
D(z) = \begin{bmatrix}
0.6\times (0.7 + 0.3z)^{10} & 0.4 \times(0.7 + 0.3z)^{10} \\
0.1\times (0.4 + 0.6z)^{20} & 0.9 \times(0.4 + 0.6z)^{20}
\end{bmatrix}.
\end{equation}
This above matrix completely determines the transition dynamics of the D-BMAP queueing model. For example, the first row of $D(z)$ shows that at each time slot, the queueing model transitions from state 1 to states 1 and 2 with probabilities 0.6 and 0.4, respectively, and the batch arrival follows a Binomial(10, 0.3). Let $Y$ be the service time, we assume that $Y-1$ is Poissonian distributed with parameter being $2$ to guarantee that $Y$ takes value in $\{1,2,\dots\}$. In this case, the survival function of service time is given by
\begin{equation}\label{ex1}
\Phi(t)=\sum_{i=t}^\infty \frac{2^i}{i!}e^{-2}.
\end{equation}
In Fig. \ref{fig2} we present the comparison between the exact results obtained by our results and the numerical results obtained by stochastic simulation algorithm (SSA) to validate our results. Clearly, our results agree perfectly with the numerical ones.

\begin{figure}[htb!]
	\centering\includegraphics[width = 1\textwidth]{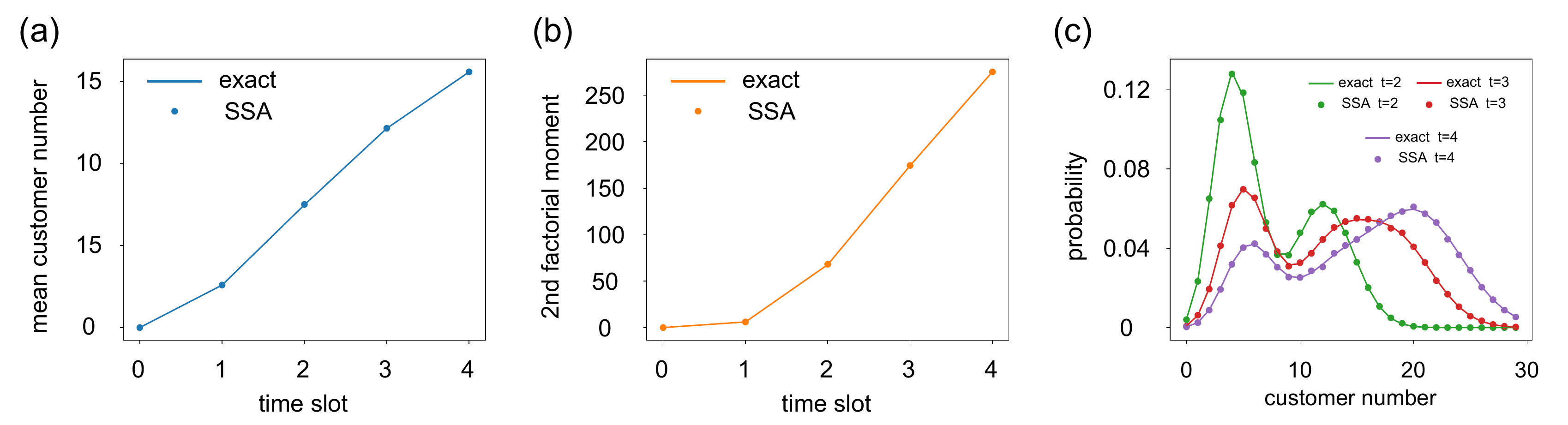}
	\caption{\textbf{Comparison between exact and numerical results for a two-state D-BMAP/G/$\infty$ queueing model given by Eqs. \eqref{example1} and \eqref{ex1}.} Here (a) compares the mean customer numbers, (b) compares the second-order factorial moments, and (c) compares the exact time-dependent distributions. The numerical results is obtained by averaging over 50000 realizations. The details of the SSA for D-BMAP/G/$\infty$ can be found in Appendix \ref{ssa}.}\label{fig2}
\end{figure}

It is well-known that the stationary distribution for the continuous-time ${\rm M}/{\rm M}/\infty$ queueing model is Poissonian distributed. We now consider the discrete-time ${\rm M}/{\rm M}/\infty$ queueing model and then make comparison. Recall that when inter-arrival time or service time distribution is described by ${\rm M}$ using Kendall's notation, then the distribution is actually geometrically distributed. We assume that the parameters of geometric distributions for the arrival and service processes are given by $p$ and $\alpha$, respectively. Then in this case we have
\begin{equation*}
D(z)=1-p+pz,\;\;\;\Phi(t)=\alpha^t.
\end{equation*}
It follows from Theorem \ref{thm2} that
\begin{equation}\label{mm}
\begin{aligned}
G(z,t)=\prod_{i=1}^{t}\left[1+p\alpha^{i}(z-1)\right],\\
\end{aligned}
\end{equation}
which means that the time-dependent distribution is actually the distribution of a sum of independent but not identically distributed Bernoulli trials.  By taking derivatives we have that
\begin{equation*}
\begin{aligned}
m(t)&\; = p \alpha \frac{1 - \alpha^t}{1 - \alpha},\\ 
\sigma^2(t)&\; = p\alpha \frac{1 - \alpha^t}{1 - \alpha} - p^2\alpha^2 \frac{1 - \alpha^{2t}}{1 - \alpha^2}.
\end{aligned}
\end{equation*}
The Fano factor is defined as the ratio of the variance to the mean of a random variable, measuring its dispersion relative to a Poisson process \cite{wang2023poisson}. The above result clearly shows that the Fano factor in this case is always strictly less than $1$. Moreover, note that the Fano factor for Poisson distribution is $1$, therefore we conclude that the customer number distribution for discrete-time ${\rm M}/{\rm M}/\infty$ is always sub-Poissonian \cite{wang2023poisson}.

To verify our results, we also use traditional method to solve discrete-time ${\rm M}/{\rm M}/\infty$ since it is already Markovian. Let $N(t)$ denote the number of customers in the system at time $t$. At each time slot, with probability $p$, one customer arrives, and each customer independently leaves the system with probability $1 - \alpha$.
Let $R(t) \sim \text{Binomial}(N(t-1), \alpha)$ be the number of customers that stay from $t-1$ to $t$, and let $A(t) \sim \text{Bernoulli}(p)$ be the number of new arrivals. Then we clearly have
\[
N(t) = R(t) + A(t).
\]
Let $G(z,t)$ be the generating function of $N(t)$ defined by
\begin{equation*}
G(z,t)=\sum_{m=0}^\infty \mathbb{P}\left(N(t)=m\right)z^m.
\end{equation*}
By the mutual independence between $R(t)$ and $A(t)$, we obtain that
\begin{equation*}
G(z,t) = \mathbb{E}[z^{R(t) + A(t)}] = \mathbb{E}[z^{R(t)}]\cdot \mathbb{E}[z^{A(t)}].
\end{equation*}
It is easy to prove that
\begin{equation*}
\begin{aligned}
\mathbb{E}[z^{R(t)}] &= G(1 - \alpha + \alpha z,t-1). \\
\mathbb{E}[z^{A(t)}] &= 1 - p + pz.
\end{aligned}
\end{equation*}
Combining the above results gives
\begin{equation}\label{i}
G(z,t)= G(1 - \alpha + \alpha z,t-1) \cdot \left(1 - p + pz\right).
\end{equation}
Since $N(0) = 0$ yields $G(z,0) = 1$, applying Eq. \eqref{i} iteratively finally yields
\[
G(z,t)=\prod_{i=1}^{t}\left[1+p\alpha^{i}(z-1)\right],\\
\]
which is fully consistent with Eq. \eqref{mm}.

\;\\

\begin{appendices}
	\section{Details of simulation}\label{ssa}
	The details of the stochastic simulation algorithm (SSA) for simulating discrete non-Markovian queueing models with general service time distribution are described as follows:
\begin{enumerate}
	\item[Step 1] Use the classical SSA to generate the stochastic trajectories of the customer up to time $t$ according to the Markovian arrival dynamics.
	\item[Step 2] Determine the customer arrival time points $\tau_1,\tau_2,\cdots,\tau_N$ before time $t$. These time points will be referred to as the arrival time points.
	\item[Step 3] Generate $N$ service times that are drawn from the service time distribution, denoted by $Y_1,Y_2,\cdots,Y_N$, and then add them to the $N$ arrival time points to obtain their service completion times, i.e. $\tau_1+Y_1,\tau_2+Y_2,\cdots,\tau_N+Y_N$.
	\item[Step 4] Determine the number of customers whose service completion time points are strictly larger than time $t$, denoted by $N_d$. Then the number of customers present at time $t$ is $N_d$.
	\item[Step 5] Use the simulated data for a large number of trajectories to obtain the numerical customer number distribution at time $t$.
\end{enumerate}
To the convenience of readers, we reemphasize here that we have assumed that customers which complete their service at time $t$ will not be counted in the customer number distribution at time $t$.
\end{appendices}

\section*{Acknowledgments}

This work was supported by grants No. 12401629 from Natural
Science Foundation of P. R. China.

\section*{Data availability}

No data was used for the research described in the article.

%%===========================================================================================%%
%% If you are submitting to one of the Nature Portfolio journals, using the eJP submission   %%
%% system, please include the references within the manuscript file itself. You may do this  %%
%% by copying the reference list from your .bbl file, paste it into the main manuscript .tex %%
%% file, and delete the associated \verb+\bibliography+ commands.                            %%
%%===========================================================================================%%

\bibliography{sn-bibliography}
\bibliographystyle{elsarticle-num}% common bib file
%% if required, the content of .bbl file can be included here once bbl is generated
%%\input sn-article.bbl

\end{document}